\newtheorem{theorem}{Theorem}
\newtheorem*{theorem*}{Theorem}
\newtheorem{corollary}[theorem]{Corollary}
\newtheorem{proposition}[theorem]{Proposition}
\theoremstyle{definition}
\newtheorem{remark}[theorem]{Remark}
\newtheorem{definition}[theorem]{Definition}
\newcommand{\Z}{\mathbb{Z}}
\date{\today}
\title{Representations on canonical models of generalized Fermat curves and their syzygies}
\author[K. Karagiannis]{Kostas Karagiannis}
\address{School of Mathematics,
University of Manchester\\
Oxford Road
Manchester M13 9PL
United Kingdom}
\email{konstantinos.karagiannis@manchester.ac.uk}
\newcommand{\aprod}{\mathop{\operator@font \hbox{\Large$\ast$}}}
\begin{document}
\begin{abstract}
We study canonical models of $\left(\mathbb{Z}/k\mathbb{Z}\right)^n$- covers of the projective line, tamely ramified at exactly $n+1$ points each of index $k$, when $k,n\geq 2$ and the characteristic of the ground field $K$ is either zero or does not divide $k$. We determine explicitly the structure of the respective homogeneous coordinate ring first as a graded $K$-algebra, next as a $\left(\mathbb{Z}/k\mathbb{Z}\right)^n$- representation over $K$, and then as a graded module over the polynomial ring; in the latter case, we give generators for its first syzygy module, which we also decompose as a direct sum of irreducible representations.
\end{abstract}

\maketitle


\section{Introduction}

\subsection{Generalized Humbert and Fermat curves}
At the turn of the previous century, motivated by the theory of elliptic integrals and multiply periodic functions, G. Humbert \cite{HumbertGeorges} and H.F. Baker \cite{BakerOld} independently studied  pencils of complex projective curves of genus $5$ that arise as ramified coverings of the projective line, are branched at 5 distinct points of order 2, and are stable under the action of $\left(\Z/2\Z\right)^4$. Humbert's suggestion to seek for possible generalizations, naturally lead to covers of the projective line with a fixed number of distinct branch points all of which share the same ramification index.

This was formalized in \cite{MR2391145} and \cite{Gonzalez-Diez2009-md}, where the authors considered for each pair of integers $k,n\geq 2$, complex projective curves $F_{k,n}$ that admit an action of $\left(\Z/k\Z\right)^n$ with quotient isomorphic to the projective line and branch locus consisting of $n+1$ distinct points, each of order $k$. For $(k,n)=(2,4)$ one retrieves classic Humbert curves, while for $k=2$ and arbitrary $n$ the authors call $F_{2,n}$ a generalized Humbert curve. If $k$ is allowed to vary arbitrarily and $n=2$ is fixed, one obtains plane curves branched at three distinct points, which up to a suitable M\"obius transformation can be assumed to be $\{0,1,\infty\}$. This property characterizes classic Fermat curves, which are defined by equations of the form $x^k+y^k+z^k=0$; this motivated using the term generalized Fermat curve for $F_{k,n}$.

Since their introduction 15 years ago, generalized Fermat curves have been thoroughly studied by several mathematicians, led by R. Hidalgo and his collaborators. We mention indicatively results on Fuchsian and Schottky uniformizations \cite{MR2391145}, moduli theory \cite{Gonzalez-Diez2009-md}, Jacobian varieties \cite{JacobGFK} and \cite{MR4114973}, automorphism groups \cite{HiRuKoPa}, and relations to Ihara's theory \cite{Ihara1985-it} of Braid representations of absolute Galois groups \cite{MR4186523}. Of the latest developments, we single out \cite{MR4140621}, in which, among other results, Hidalgo finds a basis for the global sections of the sheaf $\Omega_{F_{k,n}}$ of regular $1$-differentials, then proceeds to study a particular projective embedding of $F_{k,n}$. 

These two results motivated the present paper, which treats the case of higher order differentials $\Omega_{F_{k,n}}^{\otimes m}$, the induced canonical embedding of the curve, and takes into account the action of $\left(\Z/k\Z\right)^n$. The case of Fermat curves $F_{k,2}$ was recently established by the author in \cite{MR4525558}; the current work can be viewed as a simultaneous generalization of this paper for arbitrary $n$ and of Hidalgo's paper for arbitrary $m$. Apart from the particular interest to generalized Fermat curves discussed above, motivation also comes from the broader theme of homogeneous coordinate rings, which we briefly present in the next subsection.

\subsection{Homogeneous coordinate rings}
Let $K$ be an algebraically closed field of arbitrary characteristic. Consider a triple $(X,G,\mathcal{L})$, where $X$ is a smooth projective curve over $K$, $G$ is a finite subgroup of ${\rm Aut}_k\left(X\right)$ and $\mathcal{L}$ is a very ample $G$-equivariant line bundle. 
The reader should always keep in mind the triple $(F_{k,n},\left(\Z/k\Z\right)^n,\Omega_{F_{k,n}})$ discussed in the previous section as a toy example for what will follow.

Assuming that $X$ is not hyperelliptic and has genus $g$ which is at least $3$, one has a projective embedding $X\hookrightarrow \mathbb{P}_k\left(H^0\left(X,\mathcal L\right)\right)$, which by construction must be $G$-equivariant. The homogeneous coordinate ring of $X$ relative to this embedding, denoted by $S_{X,\mathcal{L}}$, can be endowed with several algebraic structures, which reflect the geometric and representation-theoretic properties of the triple $(X,G,\mathcal{L})$.
\begin{enumerate}
\item It carries the structure of a graded $K$-algebra, since $S_{X,\mathcal{L}}=\bigoplus_{m\geq 0}H^0\left(X,\mathcal{L}^{\otimes m}\right)$.
\item The action of $G$ extends to linear representations on each $H^0\left(X,\mathcal{L}^{\otimes m}\right)$, which respect the grading, thus making $S_{X,\mathcal{L}}$ into a graded $KG$-module.
\item The projective embedding induces a graded ring homomomorphism from $S={\rm Sym}\left(H^0\left(X,\mathcal L\right)\right)$ to $ S_{X,\mathcal{L}}$, endowing the latter with the structure of a graded module over the former.
\item The actions of $G$ on $S$ and $S_{X,\mathcal{L}}$ satisfy the necessary compatibility properties so that  $S_{X,\mathcal{L}}$ becomes a graded $SG$-module.
\end{enumerate}

The primary goal of this paper is to demonstrate that one can study all four structures simultaneously, starting with (1) and working their way up into (4) as follows. First, construct explicit bases over $K$ for the graded pieces $H^0\left(X,\mathcal{L}^{\otimes m}\right)$. Proceed with decomposing $S_{X,\mathcal{L}}$ as a direct sum of indecomposable $KG$-modules, graded-piece-by-graded-piece. Then describe the kernel of the structure map $S\rightarrow S_{X,\mathcal{L}}$ and resolve $S_{X,\mathcal{L}}$ to get its graded syzygies and Betti numbers. Finally decompose the syzygy modules into direct sums of indecomposables.

It is worth mentioning here that the problem of determining each structure has been extensively studied in the literature in varying levels of generality, and has motivated much of the interaction between arithmetic geometry and representation theory in the recent decades. The bibliography is vast, and we restrict to mentioning a small subset of the relevant results. Determining (2) is referred to as the Galois module structure problem for sheaf cohomology, which can be traced back to Hecke \cite{MR3069500}. The case ${\rm char}(K)\nmid |G|$ was settled in \cite{MR589254}, generalizing work of Chevalley and Weil \cite{MR3069638}; however, the formulas are given in terms of the local monodromy at ramification points which is often hard to compute, as demonstrated in our work \cite{MR4501574}. The case of wild ramification remains open and there exist only partial results, see for example \cite{Koeck:04}, \cite{MR2434287}, \cite{karan} and \cite{MR4130074}. The problem falls into the  general context of equivariant Euler characteristics \cite{MR1274097}. Historically, the starting point for (3) is K. Petri's analysis \cite{MR1512232} of the classic theorem by M. Noether and Enriques \cite{Saint-Donat73}. Generating sets of different flavors for the first syzygy module have been computed; the approach taken here builds on previous work of the author \cite{charalambous2019relative}, closely related to that of \cite{MR3337885} and \cite{MR4302549}. The most explicit result on higher syzygies is Schreyer's algorithm \cite{MR1129577}, and much more is known about the Betti table of $S_{X,\Omega_X}$, see \cite[\S 9]{MR2103875} and \cite{MR3729076}. The de facto central problem of the area has been M. Green's syzygy conjecture, settled by Voisin in characteristic $0$ in \cite{MR1941089}, \cite{MR2157134}, then generalized in \cite{MR4022070}; the topic remains an active area of research. Group actions on resolutions of graded modules over polynomial rings, the general framework for (4), have been studied in relation to Boij-S\"oderberg theory \cite{MR2918721}, Veronese subalgebras \cite{almousa2022equivariant}, and permutations of monomial ideals \cite{MR4407114}, \cite{MR4155201}. For an overview of computational flavor, see \cite{MR4381679}. To the author's knowledge, the case of homogeneous coordinate rings has been minimally explored. The only relevant reference is \cite{MR4333646}.

In conclusion, the different structures have been mostly treated separately so far and it is our hope to unify these themes under the same framework and explore possible connections. The author believes that this can also shed light to  the generalized version of Oort's conjecture on lifting curves with automorphisms \cite{Obus2017}, settled for cyclic groups in \cite{ObusWewers} and \cite{MR3194816}, but still open in its full generality.
\subsection{Outline}
In Section \ref{sec:preliminaries} we review the basic properties of generalized Fermat curves from \cite{MR4140621}. The main ingredients are the presentation of their automorphisms in eq. (\ref{eq:aut}), their description as fiber products of classic Fermat curves in eq. (\ref{eq:model2}) and the basis for the global sections of regular 1-differentials in eq. (\ref{eq:1-diff}). We then proceed with our study of differentials of higher order. First in Section \ref{sec:diffs} as vector spaces over the ground field (Theorem \ref{th:basis}) then as representations (Theorem \ref{th:mult}). In Section \ref{sec:canonical} we consider their direct sum as a graded module over a polynomial ring by invoking the classic result of M. Noether and Enriques. By using a variant of the techniques of \cite{charalambous2019relative} and \cite{MR3337885}, we prove in Theorem \ref{th:main} that the first syzygy module is generated by a collection of binomials and trinomials, defined in eq. (\ref{eq:binoms}) and  eq. (\ref{eq:trinoms}) respectively. Finally, we extend the group action both on the said polynomial ring and on the syzygy module and decompose them into direct sums of irreducible representations in Proposition \ref{prop:decomp-sym} and Corollary \ref{cor:decomp-syz} respectively.

\subsection*{Acknowledgments} The author would like to thank Ioannis Tsouknidas for bringing Hidalgo's paper to their attention and for suggesting to apply the techniques of \cite{charalambous2019relative} to prove Theorem \ref{th:main}, and Aristides Kontogeorgis for helpful comments on early versions of this paper. This research was supported by EPSRC grant no. EP/V036017/1.

\section{Preliminaries on Generalized Fermat Curves}\label{sec:preliminaries}

Let $K$ be an algebraically closed field of characteristic $p\geq 0$, and let $k,n\geq 2$ be integers such that $(k-1)(n-1)>1$ and $p\nmid k$. \emph{A generalized Fermat curve of type }$(k, n)$ is a non-singular projective algebraic curve $F_{k,n}$ over $K$
admitting a group of automorphisms $H$ isomorphic to $\left(\mathbb{Z}/k\mathbb{Z}\right)^{n}$ such that the quotient $F_{k,n}/H$ is the projective line $\mathbb{P}^1_K$ with $n + 1$ distinct branch points, each one of order $k$. For $n=2$, one retrieves the definition of the classic Fermat curve, given by the vanishing of $X_0^k+X_1^k+X_2^k=0$ in $\mathbb{P}^2_K$.

To make things more explicit, consider the complete intersection in $\mathbb{P}^n_K$ given by the fiber product of $(n-1)$-many classic Fermat curves
\begin{equation*}\label{eq:model}
C_{\lambda_1,\ldots,\lambda_{n-1}}^k:  \left\{\lambda_iX_0^k+X_1^k+X_{i+1}^k=0:1\leq i\leq n-1  \right\}\subset \mathbb{P}_K^n,
\end{equation*}
where $\lambda_1=1$ and $\lambda_i\in K-\{0,1,\infty\}\text{ are pairwise distinct for }2\leq i\leq n-1$. Each classic Fermat curve above is branched at $0,\infty$ and $\lambda_i$ for $2\leq i\leq n$; thus, by setting $\lambda_0=0$ and $\lambda_{n}=\infty$, the branch locus $\{\lambda_i:0\leq i\leq n\}$ of $C_{\lambda_1,\ldots,\lambda_{n-1}}^k$ consists of $n + 1$ distinct points. Further, if $\zeta=\zeta_k$ is a primitive $k$-th root of unity and $H$ is the group generated by the automorphisms
\begin{equation}\label{eq:aut}
\{\phi_{j}:0\leq j\leq n\} \text{ where } \phi_jX_i=\zeta^{\delta_{ij}}X_i,
\end{equation}
 then one has that $\phi_0\cdots\phi_n=1$ and so $H\cong \left(\mathbb{Z}/k\mathbb{Z}\right)^{n}$. Thus, $C_{\lambda_1,\ldots,\lambda_{n-1}}^k$ is a generalized Fermat curve of type $(k, n)$; its genus can be directly computed from the Riemann-Hurwitz formula
\begin{equation}\label{genus}
g_{k,n} = 
|H|\left(g_{\mathbb{P}^1_K}-1\right)+1+\sum_{i=0}^n\left(1-\frac{1}{k}\right)=
1 + \frac{k^{n-1}}{2}\left[(k-1)(n-1)-2\right].
\end{equation}
The converse statement, that any generalized Fermat curve of type $(k, n)$ arises in the above manner, was first proved for $K=\mathbb{C}$ in \cite[Theorem 5]{Gonzalez-Diez2009-md}, then extended to arbitrary $K$ in \cite[Theorem 2.2]{MR4140621}. In what follows, we identify $F_{k,n}$ with the complete intersection $C_{\lambda_1,\ldots,\lambda_{n-1}}^k$. We shall be working with the affine model obtained by setting $x=X_1/X_0$ and $y_i=X_i/X_0$ for $2\leq i\leq n$,
\begin{equation}\label{eq:model2}
 F_{k,n}:
 \left\{
  \begin{array}{rl}
  1+x^k+y_2^k &=0\\
  \lambda_2+x^k+y_3^k &=0\\
&\vdots\\
  \lambda_{n-1}+x^k+y_{n}^k &=0
  \end{array}
\right\}.
\end{equation}
Let $\Omega_{F_{k,n}}$ denote the canonical sheaf on $F_{k,n}$. In \cite[Theorem 3.1]{MR4140621}, the author proves that the collection
\begin{equation}\label{eq:1-diff}
\mathcal{B}_{k,n}
=
\left\{
\frac{x^r dx}{y_2^{a_2}y_3^{a_3}\dots y_{n}^{a_{n}}}:
0\leq a_i\leq k-1\text{ for all }2\leq i\leq n,\text{ and }0\leq r\leq \sum_{i=2}^{n}a_i-2
\right\}
\end{equation}
forms a $K$-basis for vector the space of global sections $H^0\left(F_{k,n},\Omega_{F_{k,n}}\right)$. Assuming that $(n-1)(k-1)>2$, one has by \cite[Theorem 4]{Gonzalez-Diez2009-md} that $F_{k,n}$ is not hyperelliptic and thus, the choice of basis $\mathcal{B}_{k,n}$ above gives rise to an embedding $F_{k,n}\hookrightarrow\mathbb{P}^{g_{k,n}-1}_K$. These results constitute the starting point of our analysis.

\section{Holomorphic $m$-differentials}\label{sec:diffs}

For $m\geq 1$, let $\Omega_{F_{k,n}}^{\otimes m}$ be the sheaf of regular $m$-differentials on $F_{k,n}$. The global sections $V_m=H^0(F_{k,n},\Omega_{F_{k,n}}^{\otimes m})$ form a vector space over the ground field $K$ of dimension
\begin{equation}\label{eq:hilb}
d_m=\dim_K V_m
=
\begin{cases}
g_{k,n}&, \text { if }m=1 \\
(2m-1)(g_{k,n}-1)&, \text{ if }m\geq 2.
\end{cases}
\end{equation}
To describe an explicit basis for each $V_m$, consider the {\em meromorphic} differentials
\begin{equation}\label{def:theta}
\theta_{r,\mathbf{a}}^{(m)}=\frac{x^r dx^{\otimes m}}{y_2^{a_2}y_3^{a_3}\dots y_{n}^{a_{n}}},\;\text{where }
(r,\mathbf{a})=(r,a_2,\ldots,a_n)\in\mathbb{Z}^n.
\end{equation}
The divisors ${\rm div}(\theta_{r,\mathbf{a}}^{(m)})$ can be computed using a similar argument to that used in \cite[\S 3.2]{MR4140621}: for each $0\leq j\leq n$, let $\phi_j$ be the automorphism defined in eq. (\ref{eq:aut}), let $\{P_{j\ell}:1\leq \ell\leq k^{n-1}\}$ denote its set of fixed points and set $D_j=\sum_{\ell=0}^{k^{n-1}}P_{j,\ell}$ to be the corresponding divisor. We then have that
\[
{\rm div}(x)=-D_0+ D_1+\sum_{i=2}^n D_i, \quad
{\rm div}(y_j)=-D_0+D_j,\quad
{\rm div}(dx)=-2D_0+\sum_{i=2}^n(k-1) D_i,
\]
and thus, denoting the sum $\sum_{i=2}^{n}a_i$ by $|\mathbf{a}|$, we get
\begin{equation}\label{eq:div}
{\rm div} (\theta_{r,\mathbf{a}}^{\otimes m})=
\left(|\mathbf{a}|-2m-r\right)D_0
+ rD_1
+\sum_{i=2}^{n}[m(k-1)-a_i]D_i.
\end{equation}
We are now ready to prove the following.

\begin{theorem}\label{th:basis}
For $m\geq 1$, let
\begin{equation*}\label{def:Im}
I_{k,n}^{(m)}=
\left\{
(r,\mathbf{a})\in\mathbb{Z}^{n}:
(m-1)(k-1)\leq a_i\leq m(k-1)\text{ for all }2\leq i\leq n,
\text{ and }
0\leq r\leq |\mathbf{a}|-2m
\right\}.
\end{equation*}
Then the collection
\[
\mathcal{B}_{k,n}^{(m)}=
\left\{
\theta_{r,\mathbf{a}}^{(m)}=\frac{x^r dx^{\otimes m}}{y_2^{a_2}y_3^{a_3}\dots y_{n}^{a_{n}}}:
(r,\mathbf{a})\in I_{k,n}^{(m)}
\right\}
\]
is a basis for $V_m=H^0\left(F_{k,n},\Omega_{F_{k,n}}^{\otimes m}\right)$.
\end{theorem}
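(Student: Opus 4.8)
The plan is to verify that each listed differential genuinely lies in $V_m$, then establish linear independence using the group action, and finally close the argument by a dimension count against the known value $d_m$ in (\ref{eq:hilb}).

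First I would check holomorphy. For $(r,\mathbf{a})\in I_{k,n}^{(m)}$ the three constraints $0\le r$, $a_i\le m(k-1)$, and $r\le|\mathbf{a}|-2m$ make each of the three coefficients in the divisor formula (\ref{eq:div}) nonnegative, so ${\rm div}(\theta_{r,\mathbf{a}}^{(m)})$ is effective and $\theta_{r,\mathbf{a}}^{(m)}\in V_m$. At the same time I would record that each $\theta_{r,\mathbf{a}}^{(m)}$ is an $H$-eigenvector: reading off (\ref{eq:aut}) in the affine coordinates, $\phi_1$ scales it by $\zeta^{r+m}$ and $\phi_j$ (for $j\ge2$) by $\zeta^{-a_j}$, so its character is pinned down by the residues $(r\bmod k,\,a_2\bmod k,\dots,a_n\bmod k)$.

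For independence I would decompose $V_m$ into $H$-eigenspaces and argue one character at a time. The decisive observation is that the constraint $(m-1)(k-1)\le a_i\le m(k-1)$ confines each $a_i$ to a block of exactly $k$ consecutive integers, so $a_i\mapsto a_i\bmod k$ is a bijection onto $\Z/k\Z$; hence fixing a character forces a single tuple $\mathbf{a}$ and leaves only the residue of $r$ prescribed. The differentials sharing that character are then $\theta_{r_0,\mathbf{a}}^{(m)}$ multiplied by distinct powers of $x$ (differing by multiples of $k$), and these are linearly independent because $x$ is nonconstant on the irreducible curve; independence across distinct characters is automatic.

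It then suffices to prove $|I_{k,n}^{(m)}|=d_m$. Writing $s_i=m(k-1)-a_i\in\{0,\dots,k-1\}$ and $S=\sum_{i=2}^n s_i$, the number of admissible $r$ for a given $\mathbf{a}$ is $\max\{0,\,mG+1-S\}$ with $G=(n-1)(k-1)-2$, so $|I_{k,n}^{(m)}|=\sum_{\mathbf{s}}\max\{0,\,mG+1-S\}$. I would first evaluate the same sum without the truncation: since the number $c_S$ of tuples $\mathbf{s}$ with $\sum s_i=S$ is symmetric about its mean $\tfrac{(n-1)(k-1)}{2}$, the untruncated sum collapses to $k^{n-1}\bigl(mG+1-\tfrac{(n-1)(k-1)}{2}\bigr)=(2m-1)(g-1)$, using $g-1=\tfrac{k^{n-1}}{2}G$. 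The remaining task is to show the truncation is harmless: for $m\ge2$ the non-hyperelliptic hypothesis $(n-1)(k-1)>2$ gives $(m-1)G\ge1$, which forces $c_S=0$ whenever $mG+1-S<0$, so the count equals $(2m-1)(g-1)=d_m$; for $m=1$ the truncation instead discards a single negative term, adding back exactly $c_{(n-1)(k-1)}=1$ and yielding $d_1=(g-1)+1=g$, in accordance with $I_{k,n}^{(1)}$ recovering the basis (\ref{eq:1-diff}).

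I expect the counting in the last paragraph to be the main obstacle: arranging the symmetry reduction cleanly and, above all, tracking the boundary terms that separate the $m=1$ case (with its Riemann--Roch correction) from the uniform $m\ge2$ case. By contrast, the eigenspace reduction and the residue bijection are comparatively routine once one notices that the admissible interval for each $a_i$ has length exactly $k$.
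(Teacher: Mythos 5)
Your proposal is correct, and it diverges from the paper's proof in two substantive ways. For the cardinality count, the paper argues by induction on $m$: the base case $m=1$ is quoted from Hidalgo's theorem, and the inductive step shifts $b_j=a_j-(k-1)$ to exhibit $I_{k,n}^{(m+1)}$ as the disjoint union of a copy of $I_{k,n}^{(m)}$ and a set of size $k^{n-1}\left[(k-1)(n-1)-2\right]=2(g_{k,n}-1)$. You instead compute the sum $\sum_{\mathbf{s}}\max\{0,\,mG+1-S\}$ in closed form via the symmetry of the distribution of $S$, and your boundary analysis of the truncation (vacuous for $m\ge 2$ when $G\ge 1$, a single corrective term $c_{(n-1)(k-1)}=1$ for $m=1$) is accurate; I checked that $k^{n-1}\bigl(mG+1-\tfrac{(n-1)(k-1)}{2}\bigr)=k^{n-1}\bigl(mG-\tfrac{G}{2}\bigr)=(2m-1)(g_{k,n}-1)$. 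This makes your count self-contained and uniform in $m$, at the price of slightly more bookkeeping, whereas the paper's induction is shorter but leans on the cited $m=1$ result. Second, you supply an explicit linear-independence argument: decompose $V_m$ into $H$-eigenspaces, note that the length-$k$ window $(m-1)(k-1)\le a_i\le m(k-1)$ makes $a_i\mapsto a_i\bmod k$ injective so a fixed character pins down $\mathbf{a}$ and the residue of $r$, and then separate the remaining differentials by powers of $x^k$. The paper's proof passes directly from holomorphy plus the cardinality count to the basis claim without addressing independence (implicitly relying on the standard fact that these monomials are independent, as in the $m=1$ case), so your eigenspace argument actually fills a step the paper leaves tacit. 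The only point to flag is that your appeal to $(n-1)(k-1)>2$ in the $m\ge 2$ case goes slightly beyond the blanket hypothesis $(k-1)(n-1)>1$ of Section \ref{sec:preliminaries}, but since eq. (\ref{eq:hilb}) itself requires $g_{k,n}\ge 2$, i.e. $G\ge 1$, this is consistent with the context rather than a genuine gap.
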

\begin{proof}
By eq. (\ref{eq:div}), the differentials in $\mathcal{B}_{k,n}^{(m)}$ are holomorphic. It thus suffices to show that the cardinality of $I_{k,n}^{(m)}$ equals $d_m$, as given in eq. (\ref{eq:hilb}). We proceed by induction on $m$.

For $m=1$ we retrieve the collection $\mathcal{B}_{k,n}$ of eq. (\ref{eq:1-diff}) and the result follows from \cite[Theorem 3.1]{MR4140621}. Assume that $|I_{k,n}^{(m)}|=(2m-1)(g_{k,n}-1)$ and consider the set
\[
I_{k,n}^{(m+1)}=
\left\{
(r,\mathbf{a})\in\mathbb{Z}^{n}:
m(k-1)\leq a_j\leq (m+1)(k-1)\text{ and }
0\leq r\leq |\mathbf{a}|-2m-2
\right\}.
\]
For $2\leq j\leq n$ set $b_j=a_j-(k-1)$, so that $|\mathbf{b}|=|\mathbf{a}|-(n-1)(k-1)$. Then
\[
I_{k,n}^{(m+1)}=
\left\{
(r,\mathbf{b})\in\mathbb{Z}^{n}:
(m-1)(k-1)\leq b_j\leq m(k-1)\text{ and }
0\leq r\leq |\mathbf{b}|+(n-1)(k-1)-2m-2
\right\}
\]
can be written as the disjoint union of the two sets 
\[
\left\{(r,\mathbf{b})\in\mathbb{Z}^{n}:
(m-1)(k-1)\leq b_j\leq m(k-1)\text{ and }
0\leq r\leq |\mathbf{b}|-2m
\right\}
\]
and 
\[
\left\{
(r,\mathbf{b})\in\mathbb{Z}^{n}:
(m-1)(k-1)\leq b_j\leq m(k-1)\text{ and }
|\mathbf{b}|-2m+1\leq r\leq |\mathbf{b}|+(n-1)(k-1)-2m-2
\right\}.
\]
The former equals $I_{k,n}^{(m)}$, so it has cardinality equal to $(2m-1)(g_{k,n}-1)$ by the inductive hypothesis. For the cardinality of the latter, note that the inequalities $(m-1)(k-1)\leq b_j\leq m(k-1)$ are satisfied by $k^{n-1}$ tuples $\mathbf{b}$, and that each such tuple
gives rise to exactly
$
(n-1)(k-1)-2
$
values of $r$. Thus
\[
|I_{k,n}^{(m+1)}|
=(2m-1)(g_{k,n}-1)+k^{n-1}\left[(k-1)(n-1)-2\right]
=(2m-1)(g_{k,n}-1)+2(g_{k,n}-1)=(2m+1)(g_{k,n}-1)
\]
completing the proof.
\end{proof}
We proceed with studying the action of $H\cong \left(\mathbb{Z}/k\mathbb{Z}\right)^n$ on $V_m$. To be consistent with the indexing notation introduced in eq. (\ref{def:theta}), we denote the elements of $\left(\mathbb{Z}/k\mathbb{Z}\right)^n$ by $(e_1,{\bf e})$, where ${\bf e}=(e_2,\ldots,e_n)$, and the corresponding elements of $H$ by $\sigma_{(e_1,{\bf e})}$. Recall that $H$ acts on $F_{k,n}$ via
$
\sigma_{(e_1,{\bf e})}(x,y_2,\ldots,y_n)=(\zeta^{e_1}x,\zeta^{e_2}y_2,\ldots,\zeta^{e_n}y_n),
$ 
and this induces an action on the basis $\mathcal{B}_{k,n}^{(m)}$ of $V_m$ via
\begin{equation}\label{eq:diffaction}
\sigma_{(e_1,{\bf e})}\theta_{r,\mathbf{a}}^{(m)}
=\frac{\left(\zeta^{e_1}x\right)^r d\left(\zeta^{e_1}x\right)^{\otimes m}}{\left(\zeta^{e_2}y_2\right)^{a_2}\left(\zeta^{e_3}y_3\right)^{a_3}\dots \left(\zeta^{e_n}y_{n}\right)^{a_{n}}}
=
\zeta^{e_1(r+m)-\sum_{j=2}^na_je_j}\theta_{r,\mathbf{a}}^{(m)}
=
\zeta^{e_1(r+m)-{\bf a \cdot e}}\theta_{r,\mathbf{a}}^{(m)},
\end{equation}
where we write ${\bf a \cdot e}$ for the sum $\sum_{j=2}^na_je_j$. Thus, the character of this representation is
\begin{equation}\label{eq:chars}
\chi_{V_m}:H\rightarrow K,\;\sigma_{(e_1,{\bf e})}\mapsto\sum_{(r,\mathbf{a})\in I_{k,n}^{(m)}}\zeta^{e_1(r+m)-{\bf a \cdot e}}.
\end{equation}
We identify the irreducible representations of $H$ with its group of irreducible characters
\[
\mathcal{X}(H)=
\left\{
\chi_{(h_1,{\bf h})}:H\rightarrow K,\;\sigma_{(e_1,{\bf e})}\mapsto
\zeta^{h_1e_1+\bf h\cdot e}\mid (h_1,{\bf h})
\in \left(\mathbb{Z}/k\mathbb{Z}\right)^n 
\right\},
\]
and write $
V_m=\displaystyle\bigoplus_{(h_1,{\bf h})
\in \left(\mathbb{Z}/k\mathbb{Z}\right)^n }\nu_{m,(h_1,{\bf h})}\chi_{(h_1,{\bf h})}
$
for the decomposition of $V_m$ into a direct sum of irreducibles.
\begin{theorem}\label{th:mult}
$\displaystyle
\nu_{m,(h_1,{\bf h})}
=
(n-1)(m-1)-\left\lceil\frac{|(h_1,{\bf h})|+m}{k}\right\rceil-\sum_{j=2}^{n}\left\lfloor \frac{m-1-h_j}{k} \right\rfloor+1.
$
\end{theorem}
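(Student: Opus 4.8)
The plan is to exploit that the basis $\mathcal{B}_{k,n}^{(m)}$ is already a simultaneous eigenbasis for the whole of $H$. Indeed, eq.~(\ref{eq:diffaction}) shows that each $\theta_{r,\mathbf{a}}^{(m)}$ spans an $H$-stable line on which $H$ acts through the character $\chi_{(h_1,\mathbf{h})}$ with $h_1\equiv r+m$ and $h_j\equiv -a_j\pmod k$ for $2\le j\le n$. Consequently $V_m=\bigoplus_{(r,\mathbf{a})\in I_{k,n}^{(m)}}K\,\theta_{r,\mathbf{a}}^{(m)}$ is, verbatim, a decomposition into one--dimensional irreducibles, and $\nu_{m,(h_1,\mathbf{h})}$ is nothing but the number of index pairs $(r,\mathbf{a})\in I_{k,n}^{(m)}$ whose associated character equals $\chi_{(h_1,\mathbf{h})}$. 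Thus the entire statement reduces to a lattice--point count subject to congruence conditions, and no further representation theory is needed.

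Fixing representatives $0\le h_j\le k-1$, the conditions to be counted are $a_j\equiv-h_j\pmod k$ together with $(m-1)(k-1)\le a_j\le m(k-1)$, and $r\equiv h_1-m\pmod k$ together with $0\le r\le|\mathbf{a}|-2m$. The first key observation is that the admissible window $[(m-1)(k-1),\,m(k-1)]$ for each $a_j$ consists of exactly $k$ consecutive integers, since its endpoints differ by $k-1$; hence every residue class modulo $k$ occurs in it exactly once, and the congruence $a_j\equiv-h_j$ pins down $a_j$ \emph{uniquely}. Writing $a_j=(m-1)(k-1)+t_j$ with $t_j\equiv(m-1)-h_j\pmod k$ and $0\le t_j\le k-1$ yields $t_j=(m-1-h_j)-k\lfloor(m-1-h_j)/k\rfloor$; summing over $j$ and simplifying produces a closed form for $|\mathbf{a}|$ in which the terms $\sum_{j=2}^n\lfloor(m-1-h_j)/k\rfloor$ already appear. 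This is the source of the floor contribution in the formula.

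With $\mathbf{a}$, and therefore $L:=|\mathbf{a}|-2m$, now completely determined by $\mathbf{h}$, the multiplicity equals the number of integers $r\in[0,L]$ lying in a single prescribed residue class modulo $k$. This is a standard arithmetic--progression count of shape $\lfloor(L-c)/k\rfloor+1$, with the convention that it returns $0$ when the class has no representative in $[0,L]$. Substituting the closed form for $|\mathbf{a}|$, collecting the constant $(n-1)(m-1)$, and converting the leading floor into a ceiling via $\lfloor-x\rfloor=-\lceil x\rceil$ turns this count into $(n-1)(m-1)-\lceil(|(h_1,\mathbf{h})|+m)/k\rceil-\sum_{j=2}^n\lfloor(m-1-h_j)/k\rfloor+1$, as claimed.

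The conceptual content is light, so I expect the main obstacle to be purely the floor/ceiling bookkeeping. Two points need genuine care. First, the arithmetic--progression count must be matched to the stated ceiling expression exactly, which is sensitive to the choice of residue representatives for $h_1$ and to the precise residue $h_1-m$ governing $r$; a sign or representative slip here shifts the additive constant. Second, one must treat the boundary regime where $L<0$ (equivalently, where the prescribed class for $r$ already exceeds $L$): there the multiplicity is genuinely $0$, and one should confirm that the closed form reproduces this rather than a spurious negative value, so that the identity is valid as a statement about honest multiplicities. To guard against an off--by--one, I would verify the final constant on a small explicit case, for instance the Humbert curve $(k,n)=(2,4)$ with $m=2$, where all $16$ multiplicities can be tabulated by hand and summed against $d_2$ from eq.~(\ref{eq:hilb}).
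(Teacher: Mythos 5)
Your proposal is correct and takes essentially the same route as the paper: the paper reaches the identical lattice-point count (the number of $(r,\mathbf{a})\in I_{k,n}^{(m)}$ with $r+m\equiv h_1$ and $a_j\equiv -h_j \pmod k$) via orthogonality of characters rather than by observing directly that $\mathcal{B}_{k,n}^{(m)}$ is an eigenbasis, then likewise substitutes $b_j=a_j-(m-1)(k-1)$ to pin each $a_j$ down uniquely and counts $r$ as an arithmetic progression by writing $r+m=qk+h_1$. The one delicate point you flag --- that the final constant is sensitive to the residue representative of $h_1$ and to the lower endpoint of the range of $r$ --- is precisely the step the paper handles by passing from $0\le qk+h_1-m$ to $0\le q$, so your instinct to check this boundary bookkeeping on a small example is well placed.
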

\begin{proof}
Using eq. (\ref{eq:chars}) and the fact that $\nu_{m,(h_1,{\bf h})}=\langle \chi_{V_m},\chi_{(h_1,{\bf h})}\rangle$, we obtain
\begin{equation*}
\nu_{m,(h_1,{\bf h})}=\frac{1}{|H|}\sum_{(e_1,{\bf e})\in \left(\mathbb{Z}/k\mathbb{Z}\right)^n}
\chi_{V_m}\left(\sigma_{(e_1,{\bf e})}\right)\overline{\chi_{(h_1,{\bf h})}\left(\sigma_{(e_1,{\bf e})}\right)}
=
\frac{1}{k^n}\sum_{(e_1,{\bf e})\in \left(\mathbb{Z}/k\mathbb{Z}\right)^n \atop (r,\mathbf{a})\in I_{k,n}^{(m)}}\zeta^{e_1(r+m-h_1)-({\bf a}+{\bf h}) \cdot{\bf e}}.
\end{equation*}
For a fixed value $(r,\mathbf{a})\in I_{k,n}^{(m)}$ we have
\[
\sum_{(e_1,{\bf e})\in \left(\mathbb{Z}/k\mathbb{Z}\right)^n }\zeta^{e_1(r+m-h_1)-({\bf a}+{\bf h}) \cdot{\bf e}}
=
\begin{cases}
k^n&, \text { if } \left(r+m-h_1, {\bf a}+{\bf h}\right)\equiv {\bf 0}\;{\rm mod\;}k \\
0&, \text{ otherwise,}
\end{cases}
\]
and thus $\nu_{m,(h_1,{\bf h})}$ is equal to the cardinality of  $
\left\{
(r,\mathbf{a})\in I_{k,n}^{(m)}: \left(r+m-h_1, {\bf a}+{\bf h}\right)\equiv {\bf 0}\;{\rm mod\;}k
\right\}
$.
Spelled out, one needs to count the number of tuples $(r,\mathbf{a})\in \mathbb{Z}^n$ that satisfy the relations
\[
\left\{
  \begin{array}{r}
  (m-1)(k-1)\leq a_j\leq m(k-1)\\
  0\leq r\leq |\mathbf{a}|-2m
    \end{array}
\text{ and }
\begin{array}{r}
   r+m\equiv h_1\;{\rm mod\;}k\\
  a_j\equiv -h_j\;{\rm mod\;}k
  \end{array}
\right\},
\]
or equivalently, setting $b_j=a_j-(m-1)(k-1)$, the relations
\[
\left\{
  \begin{array}{r}
  0\leq b_j\leq k-1\\
  0\leq r\leq |\mathbf{b}|+(n-1)(m-1)(k-1)-2m
    \end{array}
\text{ and }
\begin{array}{r}
    r+m\equiv h_1\;{\rm mod\;}k\\
  b_j\equiv m-1-h_j\;{\rm mod\;}k.
  \end{array}
\right\}.
\]
There is exactly one tuple ${\bf b}$ that satisfies $0\leq b_j\leq k-1$ and $b_j\equiv m-1-h_j\;{\rm mod\;}k$, namely
\[
b_j=m-1-h_j-\left\lfloor\frac{m-1-h_j}{k}\right\rfloor k.
\]
Write $r+m=qk+h_1$ for the division of $r+m$ by $k$ and substitute into the inequality to be satisfied by $r$

\begin{align*}
0\leq qk+h_1-m&\leq (n-1)(m-1) -|{\bf h}|-\sum_{j=2}^{n}\left\lfloor \frac{m-1-h_j}{k} \right\rfloor k +(n-1)(m-1)(k-1)-2m\\
\Leftrightarrow 
0\leq qk&\leq 
 (n-1)(m-1)k-|(h_1,{\bf h})|-m-\sum_{j=2}^{n}\left\lfloor \frac{m-1-h_j}{k} \right\rfloor k\\
\Leftrightarrow 
0\leq  q&\leq 
(n-1)(m-1)-\left\lceil\frac{|(h_1,{\bf h})|+m}{k}\right\rceil-\sum_{j=2}^{n}\left\lfloor \frac{m-1-h_j}{k} \right\rfloor
\end{align*}
and the result follows.
\end{proof}

\section{The canonical ring}\label{sec:canonical}
Let $S_{F_{k,n}}$ be the direct sum of the $K$-vector spaces $V_m=H^0(\Omega_{F_{k,n}}^{\otimes m},F_{k,n})$ for $m\geq 0$. It is equipped with the structure of a graded ring, with multiplication defined as 
\[
V_m\times V_{m'}\rightarrow V_{m+m'},\; 
\left(fdx^{\otimes m},gdx^{\otimes m'}\right)\mapsto fgdx^{\otimes (m+m')}.
\]  
Let $S$ be the symmetric algebra ${\rm Sym}\left(V_1\right)$; using Theorem \ref{th:basis}, we identify $S$ with the polynomial ring with variables $\{z_{r,\bf{a}}:(r,{\bf a})\in I_{k,n}^{(1)}\}$, indexed by the elements of 
\[
I_{k,n}^{(1)}
=
\left\{
(r,{\bf a})\in\mathbb{Z}^n:0\leq a_j\leq k-1\text{ and }0\leq r\leq |a|-2
\right\}.
\]
The assignment $z_{r,\bf{a}}\mapsto\theta_{r,\bf{a}}^{(1)}$ can be naturally extended to a homogeneous homomorphism of graded rings
$\phi:S\rightarrow S_{F_{k,n}}$, which endows $S_{F_{k,n}}$ with the structure of a graded $S$-module. Assuming that $(n-1)(k-1)>2$, one has by \cite[Theorem 4]{Gonzalez-Diez2009-md} that $F_{k,n}$ is not hyperelliptic. Thus, we may invoke the following classic result of M. Noether, Enriques and Petri, see\cite{Saint-Donat73}.
\begin{theorem}\label{th:Petri}
The canonical map $\phi:S\rightarrow S_{F_{k,n}}$ is surjective. If $F_{k,n}$ is neither trigonal nor a plane quintic, then the kernel of $\phi$ is generated by homogeneous elements of degree $2$.
\end{theorem}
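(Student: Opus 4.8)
The statement is the classical theorem of Noether, Enriques and Petri, so the plan is to follow its two-part architecture: first establish that $\phi$ is surjective (Max Noether's projective normality of the canonical embedding), and then analyze $\ker\phi$ in low degree (Petri's quadric generation). Throughout write $\omega=\Omega_{F_{k,n}}$ and $g=g_{k,n}\ge 3$. Since $\phi$ is a homomorphism of graded rings generated in degree one, surjectivity reduces, by an immediate induction on the grading, to surjectivity of all the multiplication maps
\[
\mu_m\colon H^0(\omega)\otimes H^0(\omega^{\otimes m})\longrightarrow H^0(\omega^{\otimes(m+1)}),\qquad m\ge 1.
\]

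For $m\ge 2$ the engine is the base-point-free pencil trick. As $F_{k,n}$ is non-hyperelliptic of genus $g\ge 3$ (using $(n-1)(k-1)>2$), the bundle $\omega$ is very ample, hence base-point-free, so a general two-dimensional subspace $W=\langle s,t\rangle\subseteq H^0(\omega)$ has empty base locus. Tensoring the tautological sequence $0\to\omega^{-1}\to W\otimes\mathcal{O}_{F_{k,n}}\to\omega\to 0$ by $\omega^{\otimes m}$ and taking cohomology identifies $\ker\big(W\otimes H^0(\omega^{\otimes m})\to H^0(\omega^{\otimes(m+1)})\big)$ with $H^0(\omega^{\otimes(m-1)})$; the Riemann--Roch dimensions of eq.~(\ref{eq:hilb}) then show the image already has the full dimension $(2m+1)(g-1)$ for $m\ge 3$, the single missing unit at $m=2$ being supplied by enlarging $W$ to all of $H^0(\omega)$. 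The genuinely delicate case is $m=1$, i.e.\ surjectivity of $\mathrm{Sym}^2 H^0(\omega)\to H^0(\omega^{\otimes 2})$: here the pencil count falls short and the non-hyperelliptic hypothesis becomes indispensable. I would prove it by passing to a general divisor $D\in|\omega|$ and reducing to the statement that the $2g-2$ points of $D$, in uniform position in $\mathbb{P}^{g-2}$, impose independent conditions on quadrics. This quadratic-normality step is the first real obstacle and cannot be reached by a naive dimension count.

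With surjectivity in hand one computes $\dim(\ker\phi)_2=\binom{g+1}{2}-(3g-3)=\binom{g-2}{2}$, the expected number of quadrics through the canonical curve. Petri's theorem asserts that these quadrics generate $\ker\phi$ in all degrees unless $F_{k,n}$ is trigonal or a plane quintic. The plan is to follow Petri's explicit analysis: fix a basis of $H^0(\omega)$ adapted to two general points, record the quadratic relations dictated by the multiplication table of the canonical ring, and compare, in degree $3$, the dimension of $V_1\cdot(\ker\phi)_2$ with that of $(\ker\phi)_3=\ker\big(\mathrm{Sym}^3 H^0(\omega)\to H^0(\omega^{\otimes 3})\big)$. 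These agree --- so the quadrics already generate in degree $3$ --- except for a single obstructing element, and one shows that this element is nonzero precisely when $F_{k,n}$ carries a $g^1_3$ (the trigonal case) or is a smooth plane quintic ($g^2_5$). Once degree $3$ is settled, a Castelnuovo--Mumford regularity argument propagates quadric generation to all higher degrees. This degree-$3$ case distinction, isolating the lone syzygy obstruction and matching it to the two geometric exceptions, is the technical heart of Petri's theorem and the main obstacle overall; in the write-up I would cite the explicit computation of \cite{Saint-Donat73} rather than reproduce it, and, for our application, verify separately from the complete-intersection model (\ref{eq:model2}) that $F_{k,n}$ is neither trigonal nor a plane quintic under the standing hypotheses.
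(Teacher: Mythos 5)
The paper does not actually prove this statement: it is invoked as the classical Noether--Enriques--Petri theorem with a bare citation to \cite{Saint-Donat73}, which is exactly where your outline also ends up deferring the technical heart (Noether's quadratic normality at $m=1$ and Petri's degree-$3$ obstruction analysis), so your sketch is a faithful summary of the cited argument rather than a competing proof. The only caution worth recording is that your intermediate route to Noether's theorem via the uniform position of a general canonical divisor is characteristic-sensitive, while the paper's standing hypotheses allow ${\rm char}(K)=p>0$ with $p\nmid k$; citing \cite{Saint-Donat73}, whose treatment is valid in all characteristics, as you propose to do for the Petri step, is therefore the safer move for the Noether step as well.
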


In what follows, we assume that $F_{k,n}$ is neither trigonal nor a plane quintic and proceed with the description of a generating set for $\ker\phi$ in degree $2$, by considering the $K$-linear map 
\[
\phi_2:S_2\twoheadrightarrow V_2,\;
z_{r,\bf{a}}z_{s,\bf{b}}\mapsto\theta_{r+s,\bf{a+b}}^{(2)}.
\]
Notice that $\phi\left(z_{r,\bf{a}}z_{s,\bf{b}}\right)=\phi\left(z_{t,\bf{c}}z_{u,\bf{d}}\right)\Leftrightarrow (r+s,{\bf a}+{\bf b})=(t+u,\bf{c}+{\bf d})$ and so
\begin{equation}\label{eq:binoms}
\mathcal{G}_{\rm bi}
=
\left\{
z_{r,\bf{a}}z_{s,\bf{b}}-z_{t,\bf{c}}z_{u,\bf{d}}:
(r+s,{\bf a}+{\bf b})=(t+u,\bf{c}+{\bf d})
\right\}\subseteq\ker\phi.
\end{equation}
\begin{remark}\label{rem:points}
To give a combinatorial interpretation of the above, recall that in degree $1$ we have a bijection between the variables of $S$, the basis of $V_1$ and the points of $I_{k,n}^{(1)}$ . In degree $2$, there is a similar correspondence, which however fails to be a bijection. By Theorem \ref{th:basis}, the basis of $V_2$ is in bijection with
\[
I_{k,n}^{(2)}
=
\left\{
(r,{\bf a})\in\mathbb{Z}^n:k-1\leq a_j\leq 2(k-1)\text{ and }0\leq r\leq |a|-4
\right\}.
\]
The images of the monomials in $S_2$ under $\phi$ correspond to points in the Minkowski sum
\begin{eqnarray*}
I_{k,n}^{(1)}+I_{k,n}^{(1)}
=
\left\{
(r,{\bf a})+(s,{\bf b}):(r,{\bf a}),(s,{\bf b})\in I_{k,n}^{(1)}
\right\}=
\left\{
(r,{\bf a})\in\mathbb{Z}^n:0\leq a_j\leq 2(k-1)\text{ and }0\leq r\leq |a|-4
\right\},
\end{eqnarray*}
which strictly contains $I_{k,n}^{(2)}$. Two monomials correspond to the same point if and only they give rise to a binomial in $\mathcal{G}_{\rm bi}$. 
\end{remark}

To obtain the remaining generators, for $1\leq i\leq n-1$ let ${\bf k}(i)$ be the vector $(0,\ldots,k,\ldots,0)\in \mathbb{Z}^{n-1}$ whose $i$-th coordinate equals $k$ and all others are $0$. Let $(r,{\bf a})$ be a point in $I_{k,n}^{(1)}+I_{k,n}^{(1)}$ such that $(r+k,{\bf a})\in I_{k,n}^{(1)}+I_{k,n}^{(1)}$ and $(r,{\bf a-k}(i))\in I_{k,n}^{(1)}+I_{k,n}^{(1)}$ for exactly one $1\leq i\leq n-1$. These three points give rise to a relation in $V_2$
\[
\lambda_i \theta_{r,{\bf a}}^{(2)}+\theta_{r+k,{\bf a}}^{(2)}+\theta_{r,{\bf a-k}(i)}^{(2)}
=\theta_{r,{\bf a}}^{(2)}\left(\lambda_i+x^k+y^k\right)
\]
which is zero by eq. (\ref{eq:model2}). The preimage of that relation under $\phi$ defines a subset of $\ker\phi$, and so
\begin{equation}\label{eq:trinoms}
\mathcal{G}_{\rm tri}
=
\big\{
\lambda_i
z_{r,\bf{a}}z_{s,\bf{b}}+
z_{t,\bf{c}}z_{u,\bf{d}}+
z_{v,\bf{e}}z_{w,\bf{f}}
:
\begin{array}{l}
t+u=r+s+k,\; {\bf  c+d}={\bf  a+b}\\
v+w=r+s,\;  {\bf e+f}={\bf a+b-k}(i)
\end{array},\;
1\leq i\leq n-1
\big\}
\subseteq \ker\phi.
\end{equation}
\begin{theorem}\label{th:main}
$\ker\phi$ is generated by $ \mathcal{G}_{\rm bi}\cup \mathcal{G}_{\rm trinom}$.
\end{theorem}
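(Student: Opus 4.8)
The plan is to leverage Theorem \ref{th:Petri}: since $F_{k,n}$ is assumed to be neither trigonal nor a plane quintic, $\ker\phi$ is generated in degree $2$, so it suffices to prove that the degree-$2$ component $(\ker\phi)_2$ is spanned over $K$ by the binomials and trinomials, all of which lie in $S_2$. I would factor the surjection $\phi_2\colon S_2\twoheadrightarrow V_2$ through the free $K$-vector space $K\langle I_{k,n}^{(1)}+I_{k,n}^{(1)}\rangle$ on the Minkowski sum of Remark \ref{rem:points}, writing $\phi_2=\psi\circ\pi$, where $\pi$ sends a monomial $z_{r,\mathbf a}z_{s,\mathbf b}$ to the point $(r+s,\mathbf a+\mathbf b)$ and $\psi$ sends a point $\mathbf p$ to the meromorphic differential $\theta^{(2)}_{\mathbf p}$, which is holomorphic for every $\mathbf p$ in the Minkowski sum by the divisor formula \eqref{eq:div}. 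The kernel of $\pi$ is spanned by the differences of monomials sharing an image point, and these are precisely the elements of $\mathcal{G}_{\rm bi}$; hence $\ker\pi=\langle\mathcal{G}_{\rm bi}\rangle$ and $(\ker\phi)_2=\pi^{-1}(\ker\psi)$. The problem therefore reduces to showing that $\ker\psi$ is spanned by the images $\pi(\mathcal{G}_{\rm tri})$ of the trinomials.

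For this I would use the relations coming from the defining equations \eqref{eq:model2}, exactly as in the definition of $\mathcal{G}_{\rm tri}$: whenever $(r,\mathbf a)$, $(r+k,\mathbf a)$ and $(r,\mathbf a-\mathbf k(i))$ all lie in the Minkowski sum, the identity $\theta^{(2)}_{r,\mathbf a}(\lambda_i+x^k+y_{i+1}^k)=0$ rearranges to $\theta^{(2)}_{r,\mathbf a-\mathbf k(i)}=-\lambda_i\theta^{(2)}_{r,\mathbf a}-\theta^{(2)}_{r+k,\mathbf a}$ in $V_2$. The key observation is that a point $\mathbf p=(r,\mathbf a)$ of $I_{k,n}^{(1)}+I_{k,n}^{(1)}$ fails to lie in $I_{k,n}^{(2)}$ precisely when some coordinate satisfies $a_j\leq k-2$; choosing $i=j-1$ and applying the relation based at $(r,\mathbf a+\mathbf k(i))$ rewrites $\theta^{(2)}_{r,\mathbf a}$ as a combination of two differentials whose $j$-th coordinate has been raised by $k$ into the admissible range $[k-1,2(k-1)]$, at the cost of possibly increasing $r$ by $k$. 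Since $a_j\leq k-2$ guarantees the new points still satisfy the coordinate and degree inequalities of the Minkowski sum, and the deficiency $\#\{j:a_j\leq k-2\}$ strictly drops, I would run an induction on this deficiency to conclude that, modulo $\langle\pi(\mathcal{G}_{\rm tri})\rangle$, every $\theta^{(2)}_{\mathbf p}$ is a $K$-combination of the basis differentials $\theta^{(2)}_{\mathbf q}$ with $\mathbf q\in I_{k,n}^{(2)}$.

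The proof then closes by a dimension count. Because $\pi(\mathcal{G}_{\rm tri})\subseteq\ker\psi$, the surjection $\psi$ descends to a surjection from the quotient $Q=K\langle I_{k,n}^{(1)}+I_{k,n}^{(1)}\rangle/\langle\pi(\mathcal{G}_{\rm tri})\rangle$ onto $V_2$. The reduction of the previous paragraph shows that $Q$ is spanned by the classes of the $\theta^{(2)}_{\mathbf q}$ with $\mathbf q\in I_{k,n}^{(2)}$, so $\dim_K Q\leq |I_{k,n}^{(2)}|$; on the other hand $|I_{k,n}^{(2)}|=\dim_K V_2=d_2$ by Theorem \ref{th:basis}. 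A surjection between finite-dimensional spaces of equal dimension is an isomorphism, which forces $\ker\psi=\langle\pi(\mathcal{G}_{\rm tri})\rangle$; pulling back through $\pi$ yields $(\ker\phi)_2=\langle\mathcal{G}_{\rm bi}\cup\mathcal{G}_{\rm tri}\rangle$, as required.

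I expect the reduction step to be the main obstacle, with the delicate bookkeeping being twofold. First, one must verify that every relation invoked is genuinely among the generators in $\mathcal{G}_{\rm tri}$, in particular reconciling the reduction with the ``exactly one $i$'' clause in \eqref{eq:trinoms}; this can be handled either by clearing deficient coordinates in a prescribed order, or by showing that any trinomial attached to a point admitting several admissible directions is already a consequence of $\mathcal{G}_{\rm bi}\cup\mathcal{G}_{\rm tri}$. Second, one must confirm that throughout the reduction the auxiliary points never leave $I_{k,n}^{(1)}+I_{k,n}^{(1)}$, so that all differentials appearing remain legitimate elements of $V_2$ indexed by the Minkowski sum; the inequalities $0\leq a_j\leq 2(k-1)$ and $0\leq r\leq|\mathbf a|-4$ defining that region make this a routine but necessary verification.
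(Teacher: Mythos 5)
Your argument is correct and reaches the theorem by the same overall strategy as the paper --- reduce to degree $2$ via Theorem \ref{th:Petri} and close with the count $|I_{k,n}^{(2)}|=\dim_K V_2$ --- but the combinatorial core is implemented by a genuinely different, more elementary route. The paper fixes the term order of Definition \ref{def:order} and bounds $\dim_K\left(S/\langle\mathcal{G}\rangle\right)_2$ by the number of degree-$2$ monomials avoiding ${\rm in}_\prec(\mathcal{G})$: a section $\tau$ of the index-sum map identifies the monomials surviving $\mathcal{G}_{\rm bi}$ with the Minkowski sum $I_{k,n}^{(1)}+I_{k,n}^{(1)}$, the sets $C_i$ of eq. (\ref{eq:Ci}) record which of these occur as leading terms of trinomials, and the complement is computed to be exactly $I_{k,n}^{(2)}$. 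You dispense with Gr\"obner theory altogether: factoring $\phi_2=\psi\circ\pi$ through the lattice-point space $K\langle I_{k,n}^{(1)}+I_{k,n}^{(1)}\rangle$ isolates $\mathcal{G}_{\rm bi}$ as a spanning set of $\ker\pi$, and your induction on the number of coordinates with $a_j\leq k-2$ is precisely the paper's identity $\left(I_{k,n}^{(1)}+I_{k,n}^{(1)}\right)\cap\bigcap_i\overline{C_i}=I_{k,n}^{(2)}$ run as an explicit rewriting procedure rather than a set-theoretic computation; the bookkeeping you defer does go through, since $a_j\leq k-2$ forces $a_j+k\leq 2(k-1)$ and the inequality $0\leq r\leq|{\bf a}|-4$ is preserved (indeed relaxed) after adding ${\bf k}(i)$, so all auxiliary points stay in the Minkowski sum and the deficiency strictly drops. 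What your route buys is elementarity --- no term order, no initial ideals, no standard monomials --- which is consonant with Remark \ref{rem:choice}, where the author concedes that the order is used only to orient each trinomial toward a chosen leading term; what the paper's route buys is a single uniform inequality, eq. (\ref{eq:ineq2}), in place of your case-by-case reduction. Finally, the one delicate point you flag does not actually require repair: the displayed definition in eq. (\ref{eq:trinoms}), and the paper's own use of all the sets $C_i$ in its proof, impose no ``exactly one $i$'' restriction, so every trinomial your reduction invokes --- including those based at points admitting several admissible directions --- already lies in $\mathcal{G}_{\rm tri}$.
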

The proof of Theorem \ref{th:main} will be given using a variant of the techniques introduced in \cite{charalambous2019relative}, \cite{MR3337885} and \cite{MR4302549}. To this end, we recall some facts of Gr\"obner theory from \cite[\S 1]{MR1363949}. Let $\prec$ be a term order on the monomials of $S$. For $f\in S$, let ${\rm in}_\prec(f)$ be the initial term of $f$ with respect to $\prec$; for an ideal $\mathfrak{a}$ of $S$ and a subset $\mathcal{G}\subseteq \mathfrak{a}$, let ${\rm in}_\prec\left(\frak{a}\right)=\langle {\rm in}_\prec(f): f\in\mathfrak{a}\rangle$ and ${\rm in}_\prec\left(\mathcal{G}\right)=\{{\rm in}_\prec(f):f\in \mathcal{G}\}$. In general $\langle {\rm in}_\prec\left(\mathcal{G}\right) \rangle \subseteq {\rm in}_\prec\left(\langle \mathcal{G}\rangle\right)$, with equality if and only if $\mathcal{G}$ is a Gr\"obner basis for $\langle \mathcal{G}\rangle$. A monomial is called {\em standard} with respect to $\mathfrak{a}$ if it does not lie in ${\rm in}_\prec\left(\frak{a}\right)$ and standard monomials form a $K$-basis for $S/\mathfrak{a}$. In the case of interest to this paper, when $\mathfrak{a}=\ker\phi$ and $\mathcal{G}=\mathcal{G}_{\rm bi}\cup \mathcal{G}_{\rm tri}$, we will identify the standard monomials with a subset of $I_{k,n}^{(1)}+I_{k,n}^{(1)}$, in the philosophy of Remark \ref{rem:points}. For convenience purposes we fix a term order, even though the proof is independent of the choice, see also Remark \ref{rem:choice}. In the definition below, we extend the notation ${\bf a}=(a_2,\ldots,a_n)\in\mathbb{Z}^{n-1}$ to account for collections of elements of $\mathbb{Z}^{n-1}$ which will be denoted by ${\bf a}_1=\left(a_{1,2},\ldots,a_{1,n}\right),\;{\bf a}_2=\left(a_{2,2},\ldots,a_{2,n}\right)$ and so on.
\begin{definition}\label{def:order}
Let $\prec$ be the term order on the monomials of $S$ defined as:
\[
z_{r_1,{\bf a}_1}
z_{r_2,{\bf a}_{2}}
\cdots 
z_{r_d,{\bf a}_{d}}
\prec
z_{s_1,{\bf b}_{1}}
z_{s_2,{\bf b}_{2}}
\cdots 
z_{s_2,{\bf b}_{d'}}
\text{ if and only if }
\] 
\begin{itemize}
\item $d<d'$ or 
\item $d=d'$ and $\sum r_i> \sum s_i$ or 
\item $d=d'$ and $\sum r_i= \sum s_i$ and $\sum_i a_{i,2}<\sum_ib_{i,2}$ or
\item $d=d'$ and $\sum r_i= \sum s_i$ and $\sum_i a_{i,2}=\sum_ib_{i,2}$ and $\sum_i a_{i,3}<\sum_ib_{i,3}$ or
\item[$\vdots$]
\item $d=d'$ and $\sum r_i= \sum s_i$ and $\sum_i a_{i,j}=\sum_ib_{i,j}$ for all $2\leq j\leq n$ and
\[
z_{r_1,{\bf a}_1}
z_{r_2,{\bf a}_{2}}
\cdots 
z_{r_d,{\bf a}_{d}}
<
z_{s_1,{\bf b}_{1}}
z_{s_2,{\bf b}_{2}}
\cdots 
z_{s_2,{\bf b}_{d'}}
\text{ lexicographically.}
\] 
\end{itemize}
\end{definition}
\begin{proof}[Proof of Theorem \ref{th:main}]
By construction, $\mathcal{G}=\mathcal{G}_{\rm bi}\cup\mathcal{G}_{\rm tri}\subseteq \ker\phi$, see equations (\ref{eq:binoms}) and (\ref{eq:trinoms}). By Theorem \ref{th:Petri}, the inclusion $\ker\phi\subseteq\langle \mathcal{G}\rangle$ can be checked in degree $2$; we will show that $\left( S/\langle \mathcal{G}\rangle\right)_2$ is a $K$-subspace of $\left( S/\ker\phi\right)_2$. By Gr\"obner theory, referring again indicatively to \cite[\S 1]{MR1363949}, we have that
\begin{equation}\label{eq:ineq2}
\dim_K\left( S/\langle \mathcal{G}\rangle\right)_2
=
\dim_K\left( S/ {\rm in}_\prec\langle\mathcal{G}\rangle\right)_2
\leq
\dim_K\left( S/ \langle{\rm in}_\prec\left(\mathcal{G}\right)\rangle\right)_2
=
|\mathbb{T}^2\setminus {\rm in}_\prec\left(\mathcal{G}\right)|,
\end{equation}
where $\mathbb{T}^2$ denotes the set of monomials of degree $2$ in $S$. To obtain an upper bound for $|\mathbb{T}^2\setminus {\rm in}_\prec\left(\mathcal{G}\right)|$, we consider the map of sets
\[
\tau: I_{k,n}^{(1)}+I_{k,n}^{(1)}\rightarrow \mathbb{T}^2,\;
(r,{\bf a})\mapsto\underset{\prec}{{\rm min}}
\left\{
z_{s,{\bf b}}z_{t,{\bf c}}\in\mathbb{T}^2:(s+t,{\bf b+c})=(r,{\bf a})
\right\},
\]
which is well-defined, 1-1 and has image equal to $\mathbb{T}^2\setminus{\rm in}_\prec\left(\mathcal{G}_{\rm bi}\right)$. Thus $\mathbb{T}^2\setminus{\rm in}_\prec\left(\mathcal{G}_{\rm bi}\right)$ is in bijection with $ I_{k,n}^{(1)}+I_{k,n}^{(1)}$. For $2\leq i\leq n$, consider the subsets of $ I_{k,n}^{(1)}+I_{k,n}^{(1)}$ defined as
\begin{equation}\label{eq:Ci}
C_i
=
\left\{
(r,{\bf a})\in I_{k,n}^{(1)}+I_{k,n}^{(1)}:
(r+k,{\bf a})\in I_{k,n}^{(1)}+I_{k,n}^{(1)}\text{ and } (r,{\bf a-k}(i))\in I_{k,n}^{(1)}+I_{k,n}^{(1)}
\right\}.
\end{equation}
The three monomials $\tau\left(r,{\bf a}\right),\;\tau\left(r+k,{\bf a}\right)$ and $\tau\left(r,{\bf a-k}(i)\right)$ give rise to an element of $\mathcal{G}_{\rm tri}$
\[
\lambda_i\tau\left(r,{\bf a}\right)+\tau\left(r+k,{\bf a}\right)+\tau\left(r,{\bf a-k}(i)\right),
\]
whose initial term is by construction $\tau\left(r,{\bf a}\right)$. Thus $|\bigcup_{i=2}^n C_i|\leq |{\rm in}_\prec\left(\mathcal{G}_{\rm tri}\right)|$, and so
\begin{equation*}\label{eq:ineq}
|\mathbb{T}^2\setminus {\rm in}_\prec\left(\mathcal{G}\right) |\leq
|(I_{k,n}^{(1)}+I_{k,n}^{(1)})\setminus \bigcup_{i=2}^n C_i|
=
|(I_{k,n}^{(1)}+I_{k,n}^{(1)})\bigcap_{i=2}^n {\overline C_i}|.
\end{equation*}
Using the defining inequalities of $I_{k,n}^{(1)}+I_{k,n}^{(1)}$, we observe that
\[
C_i
=
\left\{
(r,{\bf a})\in\mathbb{Z}^n:
0\leq a_j\leq 2k-2\text{ for }j\neq i,\;
k\leq a_i\leq 2k-2\text{ and }0\leq r\leq |{\bf a}|-(k+4)
\right\}.
\]
Setting $b_i=a_i-k$ and $b_j=a_j$ for $i\neq j$ yields
\[
C_i=
\left\{
(r,{\bf b})\in\mathbb{Z}^n:
0\leq b_j\leq 2k-2\text{ for }j\neq i,\;
0\leq b_i\leq k-2\text{ and }0\leq r\leq |{\bf b}|-4
\right\}
\]
and so
\[
\left(I_{k,n}^{(1)}+I_{k,n}^{(1)}\right)\bigcap_{i=2}^n {\overline C_i}
=
\left\{
(r,{\bf b})\in\mathbb{Z}^n:
k-1\leq b_i\leq 2(k-1)\text{ for }
2\leq i\leq n-1
\text{ and }0\leq r\leq |{\bf b}|-4
\right\}=I_{k,n}^{(2)}.
\]
Thus $|\mathbb{T}^2\setminus{\rm in}_\prec\left(\mathcal{G}\right)|\leq |I_{k,n}^{(2)}|$.
Theorem \ref{th:basis} and  the first part of Theorem \ref{th:Petri} then imply that
\[
|\mathbb{T}^2\setminus {\rm in}_\prec\left(\mathcal{G}\right)|\leq   |I_{k,n}^{(2)}|
 =
 \dim_kV_2=\dim_K\left( S/\ker\phi\right)_2,
\]
and so by eq. (\ref{eq:ineq2}), $\left( S/\langle \mathcal{G}\rangle\right)_2$ is a $K$-subspace of $\left( S/\ker\phi\right)_2$, as requested.
\end{proof}

\begin{remark}\label{rem:choice}
The term order of Definition \ref{def:order} was only used to argue that $\tau$ maps the sets $C_i$ of eq. (\ref{eq:Ci}) into ${\rm in}_\prec\left(\mathcal{G}_{\rm tri}\right)$, which follows since the monomial $\tau\left(r,{\bf a}\right)$ is greater than both $\tau\left(r+k,{\bf a}\right)$ and $\tau\left(r,{\bf a-k}(i)\right)$ with respect to $\prec$. Had we refrained from making a choice, we could have given a case-by-case proof: if for example $\tau\left(r+k,{\bf a}\right)$ were maximal among the three monomials we would replace $C_i$ by the set
\[
\left\{
(r,{\bf a})\in I_{k,n}^{(1)}+I_{k,n}^{(1)}:
(r-k,{\bf a})\in I_{k,n}^{(1)}+I_{k,n}^{(1)}\text{ and } (r-k,{\bf a-k}(i))\in I_{k,n}^{(1)}+I_{k,n}^{(1)}
\right\}
\] 
which has the same cardinality as $C_i$, and similarly for the third case.
\end{remark}
The action of $H$ on $V_1$ gives rise to a natural action on $S={\rm Sym}(V_1)$ which respects the grading: if $\mathfrak{m}$ is a monomial of degree $d$, and $\sigma_{(e_1,{\bf e})}\in H$ is the automorphism determined by $(e_1,{\bf e})\in\left(\Z/k\Z\right)^n$, then $\sigma_{(e_1,{\bf e})}\mathfrak{m}=\zeta^{e_1(r+d)-{\bf a\cdot e}}\mathfrak{m}$, where $(r,{\bf a})$ is the sum of the indices corresponding to the variables that divide $\mathfrak{m}$. 

This leads us to consider, for each $d\geq 1$, the $d$-fold Minkowski sum of $I_{k,n}^{(1)}$ with itself 
\[
d\cdot I_{k,n}^{(1)}=\underbrace{I_{k,n}^{(1)}+I_{k,n}^{(1)}+\cdots +I_{k,n}^{(1)}}_{d\text{-times}}.
\]
For each point $(r,{\bf a})\in d\cdot I_{k,n}^{(1)}$, let $S_{d,r,{\bf a}}$ be the $k$-span of
\[
\left\{
z_{r_1,{\bf a}_1}
z_{r_2,{\bf a}_{2}}
\cdots 
z_{r_d,{\bf a}_{d}}\in S_d
\;|\
\sum_{i=1}^d\left( r_i,{\bf a}_i\right)=(r,{\bf a})
\right\},
\]
i.e. the set of monomials $\mathfrak{m}$ of degree $d$ such that the sum of the indices corresponding to the variables that divide $\mathfrak{m}$ equals $(r,{\bf a})$. The above construction gives rise to a direct sum decomposition
\begin{equation}\label{eq:mhilb}
S
=\bigoplus_{d=0}^{\infty}S_d
=\bigoplus_{d=0}^{\infty}\bigoplus_{(r,{\bf a})\in d\cdot I_{k,n}^{(1)}} S_{d,r,{\bf a}},
\end{equation}
which determines the structure of $S$ as a $KG$-module.
\begin{proposition}\label{prop:decomp-sym}
The multiplicity $\mu_{d,(h_1,{\bf h})}$ of the irreducible representation $\chi_{(h_1,{\bf h})}$ of $H$ in the decomposition  of $S_d$ in a direct sum of irreducible representations is given by
\[
\mu_{d,(h_1,{\bf h})}=
\sum_{(r,{\bf a})\in J^{(d)}_{h_1,{\bf h}}}
\dim_K\left( S_{d,r,{\bf a}}\right),\text{ where }
J^{(d)}_{h_1,{\bf h}}=
\left\{
(r,{\bf a})\in d\cdot I_{k,n}^{(1)}\;\mid\; (r+d,-{\bf a})\equiv(h_1,{\bf h})\;{\rm mod}\;k
\right\}.
\]
\end{proposition}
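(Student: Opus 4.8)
The plan is to recognize that the graded-piece decomposition in eq.~(\ref{eq:mhilb}) already refines the $H$-isotypic decomposition of $S_d$, so that computing the multiplicities $\mu_{d,(h_1,{\bf h})}$ reduces to reading off a single character from each summand $S_{d,r,{\bf a}}$ and then collecting terms. First I would note that since $p\nmid k$, the order $|H|=k^n$ is prime to the characteristic, so $KH$ is semisimple; as $H$ is abelian, every irreducible $KH$-module is one-dimensional, and by construction these are exactly the characters $\chi_{(h_1,{\bf h})}$. Hence $\mu_{d,(h_1,{\bf h})}$ is genuinely a multiplicity of a one-dimensional representation, and the problem is pure character bookkeeping.

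Next I would invoke the action formula $\sigma_{(e_1,{\bf e})}{\mathfrak m}=\zeta^{e_1(r+d)-{\bf a}\cdot{\bf e}}{\mathfrak m}$ recorded just above the proposition: every degree-$d$ monomial ${\mathfrak m}$ with index-sum $(r,{\bf a})$ is an $H$-eigenvector whose eigencharacter depends only on $(r,{\bf a})$. Consequently each summand $S_{d,r,{\bf a}}$ of eq.~(\ref{eq:mhilb}) is $H$-stable and isotypic: as an $H$-module it is a direct sum of $\dim_K\left(S_{d,r,{\bf a}}\right)$ copies of one fixed character. To identify that character I would match exponents, observing that $\zeta^{e_1(r+d)-{\bf a}\cdot{\bf e}}=\zeta^{h_1e_1+{\bf h}\cdot{\bf e}}$ holds for all $(e_1,{\bf e})$ precisely when $r+d\equiv h_1$ and $-{\bf a}\equiv{\bf h}$ modulo $k$, i.e.\ when $(r+d,-{\bf a})\equiv(h_1,{\bf h})\pmod k$. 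This is exactly the defining condition of the index set $J^{(d)}_{h_1,{\bf h}}$.

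Finally I would collect terms. Because eq.~(\ref{eq:mhilb}) is an $H$-equivariant direct-sum decomposition, the multiplicity of $\chi_{(h_1,{\bf h})}$ in $S_d$ equals the sum of its multiplicities in the individual summands; by the previous step that contribution is $\dim_K\left(S_{d,r,{\bf a}}\right)$ when $(r,{\bf a})\in J^{(d)}_{h_1,{\bf h}}$ and zero otherwise. Summing over $J^{(d)}_{h_1,{\bf h}}$ yields the stated formula. I do not expect a serious obstacle here: the entire content lies in observing that the Minkowski-sum grading of eq.~(\ref{eq:mhilb}) separates monomials by their $H$-character, so the only care required is in the congruence bookkeeping, namely the sign on ${\bf a}$ and the shift $r\mapsto r+d$. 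The dimensions $\dim_K\left(S_{d,r,{\bf a}}\right)$ remain as the combinatorial data counting the representations of $(r,{\bf a})$ as a $d$-fold sum of elements of $I_{k,n}^{(1)}$, and are deliberately left unexpanded in the statement.
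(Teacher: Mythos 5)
Your proposal is correct and follows essentially the same route as the paper's own proof: identify each summand $S_{d,r,{\bf a}}$ of eq.~(\ref{eq:mhilb}) as isotypic for the character $\chi_{(r+d,-{\bf a})}$ via the scalar action formula, then collect summands whose index is congruent to $(h_1,{\bf h})$ modulo $k$. The only difference is that you spell out the semisimplicity and the exponent-matching a bit more explicitly, which the paper leaves implicit.
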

\begin{proof}
An automorphism $\sigma_{(e_1,{\bf e})}\in H$ acts on the elements of $S_{d,r,{\bf a}}$ by the scalar $\zeta^{e_1(r+d)-{\bf a\cdot e}}$
and thus $S_{d,r,{\bf a}}$ is isomorphic, as a $KG$-module, to a direct sum of $\dim_K\left( S_{d,r,{\bf a}}\right)$ copies of the irreducible representation determined by the class of $(r+d,-{\bf a})\in\left(\mathbb{Z}/k\mathbb{Z}\right)^n$. Hence,
\[
S_d=\bigoplus_{(r,{\bf a})\in d\cdot I_{k,n}^{(1)}} \dim_K\left( S_{d,r,{\bf a}}\right)\chi_{(r+d,-{\bf a})}
\]
and the result follows since two points $(r,{\bf a}),(s,{\bf b})\in d\cdot I_{k,n}^{(1)}$ determine the same irreducible representation if and only if $(r,{\bf a})\equiv(s,{\bf b})\;{\rm mod}\;k$. 
\end{proof}

Finally, we extend the action of $H$ on the ideal $\ker\phi$ of Theorem \ref{th:Petri}. This can be done either by comparing the action of $H$ on $S$ to that on $S_{F_{k,n}}$ and deduce that the canonical map $\phi$ is $H$-equivariant, or by directly verifying that for any generator $f\in\mathcal{G}_{\rm bi}\cup\mathcal{G}_{\rm tri}$ we have $\sigma_{(e_1,{\bf e})}f=\zeta^{e_1(r+2)-{\bf a\cdot e}}f$. Further, the assumption that ${\rm char}(K)\nmid |H|$ allows us to invoke Maschke's theorem to conclude that the sequence $0\rightarrow\ker\phi\rightarrow S\rightarrow S_{F_{k,n}}\rightarrow 0$ of Theorem \ref{th:Petri} is a split short exact sequence of $KG$-modules.
\begin{corollary}\label{cor:decomp-syz}
The multiplicity of the irreducible representation $\chi_{(h_1,{\bf h})}$ of $H$ in the decomposition  of $\left(\ker\phi\right)_d$ in a direct sum of irreducible representations is given by $\mu_{d,(h_1,{\bf h})}-\nu_{d,(h_1,{\bf h})}$, where $\mu_{d,(h_1,{\bf h})}$ is an in Proposition \ref{prop:decomp-sym} and $\nu_{d,(h_1,{\bf h})}$ is as in Theorem \ref{th:mult}.
\end{corollary}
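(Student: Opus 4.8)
The statement is a formal consequence of the split exact sequence recorded just above it, so the plan is to make that splitting precise in each graded degree and then read off multiplicities character-by-character. First I would observe that, since $\phi$ is a graded ring homomorphism and $\left(S_{F_{k,n}}\right)_d=V_d$ by the very definition of $S_{F_{k,n}}$, restricting the sequence $0\to\ker\phi\to S\to S_{F_{k,n}}\to 0$ to its degree-$d$ component yields an exact sequence of $KH$-modules
\[
0\to\left(\ker\phi\right)_d\to S_d\xrightarrow{\phi_d} V_d\to 0.
\]
That $\phi_d$ is a morphism of $KH$-modules is exactly the $H$-equivariance of $\phi$, which follows by comparing the scalar $\zeta^{e_1(r+d)-{\bf a\cdot e}}$ by which $\sigma_{(e_1,{\bf e})}$ acts on a monomial of multidegree $(r,{\bf a})$ in $S_d$ with the identical scalar appearing in eq. (\ref{eq:diffaction}) for $m=d$.

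Next I would invoke Maschke's theorem. Since $p\nmid k$ we have $p\nmid |H|=k^n$, so $KH$ is semisimple and the sequence above splits, giving an isomorphism of $KH$-modules $S_d\cong\left(\ker\phi\right)_d\oplus V_d$. Because $H$ is abelian and $K$ is algebraically closed, every $KH$-module decomposes uniquely into the one-dimensional characters $\chi_{(h_1,{\bf h})}$, and the multiplicity of a fixed $\chi_{(h_1,{\bf h})}$ is additive across direct sums. Taking this multiplicity on both sides of the isomorphism therefore gives
\[
\mu_{d,(h_1,{\bf h})}
=
\left(\text{multiplicity of }\chi_{(h_1,{\bf h})}\text{ in }\left(\ker\phi\right)_d\right)
+\nu_{d,(h_1,{\bf h})},
\]
and solving for the middle term yields the claimed value $\mu_{d,(h_1,{\bf h})}-\nu_{d,(h_1,{\bf h})}$.

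The one point that genuinely needs care — and the only place where the argument could go wrong — is the bookkeeping of the indexing of irreducibles, so that the same symbol $\chi_{(h_1,{\bf h})}$ denotes the same character in Proposition \ref{prop:decomp-sym} and in Theorem \ref{th:mult}. I would verify this by tracking a single graded piece: the span $S_{d,r,{\bf a}}$ affords the character $\chi_{(r+d,-{\bf a})}$, as recorded in the proof of Proposition \ref{prop:decomp-sym}, while $\theta_{r,{\bf a}}^{(d)}\in V_d$ affords the character with $h_1\equiv r+d$ and ${\bf h}\equiv-{\bf a}\pmod{k}$, precisely the convention used in Theorem \ref{th:mult}. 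Since $\phi_d$ carries $S_{d,r,{\bf a}}$ into the $\chi_{(r+d,-{\bf a})}$-isotypic component of $V_d$, the two labelings agree and the subtraction is legitimate for each fixed character. Everything else is the formal additivity of multiplicities in a semisimple module category, so no further obstacle arises.
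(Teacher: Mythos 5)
Your proposal is correct and follows exactly the route the paper intends: the corollary is stated as an immediate consequence of the $H$-equivariance of $\phi$ and the Maschke splitting of $0\to\ker\phi\to S\to S_{F_{k,n}}\to 0$ recorded in the preceding paragraph, with multiplicities subtracted degree by degree. Your extra check that the character labels $\chi_{(r+d,-{\bf a})}$ agree between Proposition \ref{prop:decomp-sym} and Theorem \ref{th:mult} is a sensible precaution, but it is the same bookkeeping the paper relies on implicitly.
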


We conclude our analysis with some comments on the relationship of our techniques with algebraic combinatorics and combinatorial commutative algebra. This has a twofold purpose: first to present some future directions and open problems, and second to justify why  
the formulas for $\mu_{d,(h_1,{\bf h})}$ in Proposition \ref{prop:decomp-sym} are not as concrete as the respective formulas for $\nu_{d,(h_1,{\bf h})}$ given in Theorem \ref{th:mult}.
\begin{enumerate}
\item For $d=1,\;\dim_K\left( S_{1,r,{\bf a}}\right)=1$ for all $(r,{\bf a})\in I_{k,n}^{(1)}$, and so $\mu_{1,(h_1,{\bf h})}=\nu_{1,(h_1,{\bf h})}$, reflecting the fact that $\ker\phi$ has no generators in degree 1. For $d\geq 1,\;\dim_K S_{d,r,{\bf a}}$ can be interpreted as the number of partitions of the vector $(r,{\bf a})\in\mathbb{Z}^n$ into $d$-many parts, such that each summand lies in $I_{k,n}^{(1)}$. The enumeration of such {\em multipartite partitions} is a classic, albeit difficult, problem in algebraic combinatorics, usually approached via the theory of generating functions, see \cite{MR1357776}. The requirement that the summands must be in $I_{k,n}^{(1)}$ poses an additional obstruction to obtaining explicit formulas.
\item The standard generating functions used in invariant theory of polynomial rings are equivariant Hilbert-Poincar\'e series, see for example \cite{MR526968} or \cite[\S 3]{MR1869812}. The most well-known and concrete result is Molien's formula, which expresses the generating function of the sequence $\{\mu_{d,(h_1,{\bf h})} \}_{d=0}^\infty$ as a sum of rational functions. Applying this to our case, even when $\chi_{(h_1,{\bf h})}$ is the trivial representation, does not produce a formula more illuminating than the ones obtained in Proposition \ref{prop:decomp-sym}.
\item For yet another class of relevant formal power series, we mention Erhart theory \cite[\S 12]{MR2110098}, which addresses the problem of enumerating lattice points in dilations of convex polytopes. In our context, these dilations correspond to the sets $d\cdot I_{k,n}^{(1)}$ but the desired stratification by the sets $J_{h_1,{\bf h}}^{(d)}$ in Proposition \ref{prop:decomp-sym} would require a version of Erhart theory modulo $k$. It is also worth mentioning that there is a connection with the so-called integer decomposition property of polytopes, see \cite{MR3292265}.
\end{enumerate}
In any case, the potential interplay between the three types of generating functions in the context of the present paper is something that we deem worth exploring in future work.
\begin{enumerate}
\item[(4)] The indexing of the indeterminates $z_{r,{\bf a}}$ of the polynomial ring $S$ by the elements of $I_{k,n}^{(1)}$ can be interpreted as a multigrading of $S$ by the abelian group $\mathbb{Z}^{n+1}$: the multidegree of a monomial $\mathfrak{m}=z_{r_1,{\bf a}_1}\cdots z_{r_d,{\bf a}_d}$ is ${\rm mdeg}\left(\mathfrak{m}\right)=\left(d,\sum r_i,\sum {\bf a}_i\right)\in \mathbb{Z}\times d\cdot I_{k,n}^{(1)}\subset\mathbb{Z}^{n+1}$. The decomposition of eq. (\ref{eq:mhilb}) thus gives rise to a multigraded Hilbert function via the assignment $(d,r,{\bf a})\mapsto\dim_kS_{d,r,{\bf a}}$, which is a refinement of the classic Hilbert function $d\mapsto \binom{d+g-1}{d}$ of the polynomial ring. A similar interpretation can be given for the multiplicities of the irreducible summands of the degree $d$ pieces of  $S,\;S_{F_{k,n}}$ and $\ker\phi$ as $KH$-modules. This leads towards the theory of invariant and multigraded Hilbert schemes, see \cite{MR3184162} and \cite{MR2073194} respectively; a natural question would be to seek for defining equations for the Hilbert schemes parametrizing ideals with the above multigraded Hilbert functions.
\end{enumerate}


 \def\cprime{$'$}

 \bibliographystyle{alpha}
\bibliography{KKGeneral.bib}

\end{document}